\numberwithin{equation}{section}
\newtheorem{lemma}{Lemma}[section]
\newtheorem{theorem}{Theorem}[section]
\newtheorem{proposition}{Proposition}[section]
\newtheorem{corollary}{Corollary}[section]
\newtheorem{remark}{Remark}[section]
\newenvironment{proof}{\smallskip\noindent{\bf Proof.}\rm}{\hspace*{\fill} $\Box$\medskip}
\newenvironment{proofth}{\smallskip\noindent{\bf Proof of Theorem~\ref{th1}.}\rm}{\hspace*{\fill} $\Box$\medskip}
\renewcommand{\Re}{\operatorname{Re}}
\title{Bari--Markus property for Dirac operators}
\author{
Ya.~V.~Mykytyuk, D.~V.~Puyda\thanks{\emph{Email addresses:} yamykytyuk@yahoo.com (Ya.~V.~Mykytyuk), dpuyda@gmail.com (D.~V.~Puyda)}\\
\emph{Ivan Franko National University of Lviv}\\
\emph{1 Universytetska str., Lviv, 79000, Ukraine}
}
\date{}
\begin{document}

\maketitle

\begin{abstract}
We prove the Bari--Markus property for spectral projectors of non-self-adjoint Dirac operators on $(0,1)$ with square-integrable matrix-valued potentials and some separated boundary conditions.
\end{abstract}

\section{Introduction and main results}

In the Hilbert space $\mathbb H:=L_2((0,1),\mathbb C^{2r})$, we study the non-self-adjoint Dirac operator
$$
T_Q:=J\frac{\mathrm d}{\mathrm dx}+Q
$$
on the domain
$$
D(T_Q):=\left\{(y_1,y_2)^\top\mid y_1,y_2\in W_2^1((0,1),\mathbb C^r), \quad  y_1(0)=y_2(0),\ y_1(1)=y_2(1)\right\}.
$$
Here,
$$
J:=\frac1{\mathrm i}\begin{pmatrix}I&0\\0&-I\end{pmatrix},\qquad
Q:=\begin{pmatrix}0&q_1\\q_2&0\end{pmatrix},
$$
$I:=I_r$ is the $r\times r$ identity matrix, $q_1,q_2\in L_2((0,1),\mathcal M_r)$, $\mathcal M_r$ is the set of $r\times r$ matrices with complex entries and $W_2^1((0,1),\mathbb C^r)$ is the Sobolev space of $\mathbb C^r$-valued functions.
All functions $Q$ as above form the set
$$
\mathfrak Q_2:=\{Q\in L_2((0,1),\mathcal M_{2r})\mid JQ(x)=-Q(x)J\,\,\text{a.e. on}\,\,(0,1)\}
$$
and will be called \emph{potentials} of the operators $T_Q$.

The spectrum $\sigma(T_Q)$ of the operator $T_Q$ consists of countably many isolated eigenvalues of finite algebraic multiplicities. We denote by $\lambda_j:=\lambda_j(Q)$, $j\in\mathbb Z$, the pairwise distinct eigenvalues of the operator $T_Q$ arranged by non-decreasing of their real -- and then, if equal, imaginary -- parts. For definiteness, we also assume that $\Re\lambda_0\le0<\Re\lambda_1$.
As can be proved using the standard technique based on Rouche's theorem, the numbers $\lambda_j$, $j\in\mathbb Z$, satisfy the condition
\begin{equation}\label{lambdaCond}
\sup_{n\in\mathbb Z}\sum_{\lambda_j\in\Delta_n}1<\infty
\end{equation}
and the asymptotics
\begin{equation}\label{lambdaAsymp}
\sum_{n\in\mathbb Z}\sum_{\lambda_j\in\Delta_n}|\lambda_j-\pi n|^2<\infty,
\end{equation}
where $\Delta_n:=\{\lambda\in\mathbb C\mid\pi n-\pi/2<\Re\lambda\le\pi n+\pi/2\}$, $n\in\mathbb Z$.
We then denote by $P_{\lambda_j}$ the spectral projector of the operator $T_Q$ corresponding to the eigenvalue $\lambda_j$ (see \cite[Chap.3]{kato}). We write
$$
\mathcal P_n:=\sum_{\lambda_j\in\Delta_n} P_{\lambda_j},\qquad n\in\mathbb Z,
$$
for the spectral projector of $T_Q$ corresponding to the strip $\Delta_n$.

In particular, in the free case $Q=0$ one has $\sigma(T_0)=\{\pi n\}_{n\in\mathbb Z}$. We then write $\mathcal P_n^0$ for the spectral projector of the free operator $T_0$ corresponding to the strip $\Delta_n$, $n\in\mathbb Z$.

The main result of this paper is the following theorem:
\begin{theorem}\label{th1}
For every $Q\in\mathfrak Q_2$, it holds
\begin{equation}\label{bm}
\sum_{n\in\mathbb Z}\left\|\mathcal P_n-\mathcal P_n^0\right\|^2<\infty \,.
\end{equation}
\end{theorem}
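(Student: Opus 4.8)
The plan is to realize both projectors as Riesz integrals over the common boundaries of the strips and to expand the resolvent difference in a Born-type series. Write $R_0(\lambda):=(\lambda-T_0)^{-1}$, $R_Q(\lambda):=(\lambda-T_Q)^{-1}$, and let $\ell_n$ denote the vertical line $\Re\lambda=\pi n+\pi/2$. By \eqref{lambdaAsymp} the eigenvalues $\lambda_j$ cluster at the points $\pi m$, so for all but finitely many $n$ the lines $\ell_{n-1},\ell_n$ meet neither $\sigma(T_Q)$ nor $\sigma(T_0)$ and form a contour $\partial\Delta_n$ enclosing exactly the eigenvalues in $\Delta_n$; the finitely many remaining strips add only finitely many terms to \eqref{bm} and are harmless. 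Since the boundary form of $J\,\mathrm d/\mathrm dx$ vanishes on $D(T_0)$, the operator $T_0$ is self-adjoint, with orthogonal decomposition $R_0(\lambda)=\sum_m(\lambda-\pi m)^{-1}P_m^0$, where $P_m^0$ projects onto the $r$-dimensional eigenspace $\{(\mathrm e^{\mathrm i\pi m x}c,\mathrm e^{-\mathrm i\pi m x}c):c\in\mathbb C^r\}$; in particular $\|R_0(\lambda)\|=\operatorname{dist}(\lambda,\pi\mathbb Z)^{-1}$. The resolvent identity then gives
\begin{equation*}
\mathcal P_n-\mathcal P_n^0=\frac1{2\pi\mathrm i}\oint_{\partial\Delta_n}\bigl(R_Q-R_0\bigr)\,\mathrm d\lambda,\qquad R_Q-R_0=R_0QR_0+R_0QR_0QR_Q,
\end{equation*}
and I would split the difference into the linear term $A_n^{(1)}$ and the remainder $A_n^{(2)}$ accordingly.

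For the linear term I substitute the spectral decomposition of $R_0$ and evaluate the contour integral by residues at $\pi n$, the only point of $\pi\mathbb Z$ inside $\Delta_n$. The diagonal contribution $P_n^0QP_n^0$ carries a double pole and integrates to zero, so only off-diagonal terms survive:
\begin{equation*}
A_n^{(1)}=\frac1{2\pi\mathrm i}\oint_{\partial\Delta_n}R_0QR_0\,\mathrm d\lambda=\frac1\pi\sum_{m\ne n}\frac{P_n^0QP_m^0+P_m^0QP_n^0}{n-m}.
\end{equation*}
The structural fact that drives everything is that, because $Q$ is block-off-diagonal, the matrix elements of $P_m^0QP_{m'}^0$ are the Fourier coefficients $\int_0^1 q_{1,2}(x)\mathrm e^{\mp\mathrm i\pi(m+m')x}\,\mathrm dx$, so $\|P_m^0QP_{m'}^0\|_{\mathrm{HS}}^2\le a_{m+m'}$ with $a_k:=\|\hat q_1(\pi k)\|_{\mathrm{HS}}^2+\|\hat q_2(\pi k)\|_{\mathrm{HS}}^2$; extending $q_{1,2}$ by zero to $(0,2)$ and applying Parseval there gives $\sum_k a_k=2\|Q\|_{L_2}^2$.

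Since the ranges of the $P_m^0$ are mutually orthogonal, the summands in $A_n^{(1)}$ are Hilbert--Schmidt orthogonal, whence
\begin{equation*}
\sum_n\bigl\|A_n^{(1)}\bigr\|_{\mathrm{HS}}^2=\frac1{\pi^2}\sum_n\sum_{m\ne n}\frac{\|P_n^0QP_m^0\|_{\mathrm{HS}}^2+\|P_m^0QP_n^0\|_{\mathrm{HS}}^2}{(n-m)^2}\le\frac2{\pi^2}\sum_{d\ne0}\frac1{d^2}\sum_k a_k<\infty,
\end{equation*}
where I set $d=n-m$ and summed over $n$ first, noting that for fixed $d$ the index $m+n=2n-d$ runs through each integer at most once. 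Because each $\mathcal P_n$ and $\mathcal P_n^0$ has rank $r$ for large $|n|$ (from the counting behind \eqref{lambdaCond}), operator and Hilbert--Schmidt norms are equivalent on these finite-rank operators, so the same estimate holds for $\sum_n\|A_n^{(1)}\|^2$.

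The main obstacle is the remainder $A_n^{(2)}=\tfrac1{2\pi\mathrm i}\oint_{\partial\Delta_n}R_0QR_0QR_Q\,\mathrm d\lambda$. Here one \emph{cannot} bound the integrand in norm: after the normalization $\mathrm e^{2\mathrm i\lambda}=-\mathrm e^{-2\Im\lambda}$ on $\ell_n$ every such bound is translation invariant in $n$ and hence fails to be square-summable — the decay in $n$ arises solely from the cancellation in the contour integral, exactly as the vanishing double pole produced it above. I would therefore process $A_n^{(2)}$ by the same residue calculus, controlling $R_Q$ on $\partial\Delta_n$ through the uniform bound $\|R_Q(\lambda)\|\le C\operatorname{dist}(\lambda,\sigma(T_Q))^{-1}$ that follows from \eqref{lambdaCond}--\eqref{lambdaAsymp} via a Rouch\'e/perturbation-determinant argument, while the merely square-integrable (hence unbounded) multiplication by $Q$ is tamed by the smoothing estimate $\|QR_0(\lambda)\|\le C\|Q\|_{L_2}\,\|R_0(\lambda)\|_{\mathrm{HS}}$. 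Each additional factor $QR_0$ then supplies one more Fourier coefficient $\hat q_{1,2}(\pi(\cdot))$ and a summable weight $(n-m)^{-1}$, and I expect this to yield $\sum_n\|A_n^{(2)}\|^2\le C\|Q\|_{L_2}^4$; combined with the linear estimate through $\|\mathcal P_n-\mathcal P_n^0\|^2\le2\|A_n^{(1)}\|^2+2\|A_n^{(2)}\|^2$ this gives \eqref{bm}.
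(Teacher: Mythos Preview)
Your route via the second resolvent identity and the Born split $R_Q-R_0=R_0QR_0+R_0QR_0QR_Q$ is a genuinely different strategy from the paper's; it is in the spirit of the Djakov--Mityagin Fourier method cited as \cite{bm1,bm2}. Your treatment of the linear piece $A_n^{(1)}$ is correct in substance, though the unbounded ``contour'' $\partial\Delta_n$ made of two infinite vertical lines is not usable as written (the improper integrals of $R_0$ along $\ell_n$ diverge); replacing it by the circles $\mathbb T_n=\{|\lambda-\pi n|=1\}$ fixes this and leaves the residue calculation unchanged.

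The genuine gap is $A_n^{(2)}$. You correctly diagnose that any pointwise bound on $R_0QR_0QR_Q$ over $\mathbb T_n$ is translation invariant in $n$ and hence useless, but the remedy you offer (``process by the same residue calculus \ldots\ I expect this to yield'') is not a proof. The factor $R_Q$ is not diagonal in the $P_m^0$ basis and carries its own poles inside $\mathbb T_n$, so the residue bookkeeping that worked for $A_n^{(1)}$ does not transfer. The smoothing estimate you invoke, $\|QR_0(\lambda)\|\le C\|Q\|_{L_2}\|R_0(\lambda)\|_{\mathrm{HS}}$, is not the right inequality (the relevant quantity is $\sup_x\|G_\lambda(x,\cdot)\|_{L_2}$ for the resolvent kernel, not $\|R_0\|_{\mathrm{HS}}$), and even the correct version yields only a uniform bound on $\mathbb T_n$, not decay in $n$. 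The asserted bound $\|R_Q(\lambda)\|\le C\,\mathrm{dist}(\lambda,\sigma(T_Q))^{-1}$ is a substantial statement for a non-self-adjoint operator and does not follow from \eqref{lambdaCond}--\eqref{lambdaAsymp} by a soft Rouch\'e argument. To make this route work you would need either the full Neumann series together with a proof that $\|R_0Q\|<1$ on $\mathbb T_n$ for large $|n|$ (a Riemann--Lebesgue lemma for the explicit Dirac kernel), or a direct Hilbert--Schmidt estimate on the remainder; neither is supplied.

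The paper avoids all of this by using the representation $(T_Q-\lambda\mathcal I)^{-1}=\Phi_Q(\lambda)m_Q(\lambda)\Phi_{Q^*}(\overline\lambda)^*+\mathcal T_Q(\lambda)$ with $\mathcal T_Q$ entire (Lemma~\ref{dirProp}), so only a rank-$r$ piece contributes to each $\mathcal P_n$. Together with the transformation-operator identity $\Phi_Q=(\mathcal I+\mathcal K_Q)\Phi_0$, the difference $\mathcal P_n-\mathcal P_n^0$ splits into a term governed by $m_Q-m_0$ --- controlled via \eqref{scRepr1} by the $\ell_2$ Fourier coefficients of two fixed $L_2$ functions (Lemma~\ref{auxLemma}) --- and a term $\mathcal K_Q\mathcal P_n^0+(\mathcal K_{Q^*}\mathcal P_n^0)^*+\mathcal K_Q\mathcal P_n^0\mathcal K_{Q^*}^*$, whose square-summability is immediate from $\mathcal K_Q\in\mathcal B_2$ and the orthogonality of the $\mathcal P_n^0$. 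The cancellation you are trying to extract by residues is thus done structurally, with no remainder term to estimate.
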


\noindent Relation (\ref{bm}) is called the \emph{Bari--Markus property} of spectral projectors of the operator $T_Q$.

In the scalar case $r=1$, the Bari--Markus property for the operator $T_Q$, as well as for the operators with periodic and anti-periodic boundary conditions, was established in \cite{bm1} to prove the unconditional convergence of spectral decompositions for such operators. Therein, P.~Djakov and B.~Mityagin used a technique based on Fourier representations of Dirac operators. This technique was further developed to prove the similar property for Dirac operators with regular boundary conditions in \cite{bm2}. For Hill operators with singular potentials, the Bari--Markus property was established in \cite{bmhill}.

A different and simpler technique based on some convenient representation of resol\-vents of the operators under consideration was used in \cite{sturm} to establish the Bari--Markus property for Sturm--Liouville operators with matrix-valued potentials (see \cite[Lemma~2.12]{sturm}). Therein, this result was used to solve the inverse spectral problem for such operators. For the same purpose, the Bari--Markus property was established for self-adjoint Dirac operators with square-integrable matrix-valued potentials in \cite{dirac1}.

In the present paper, we use the technique suggested in \cite{sturm} to establish the Bari--Markus property for non-self-adjoint Dirac operators with square-integrable matrix-valued potentials. This result  can be used to study the inverse spectral problems for non-self-adjoint Dirac operators on a finite intervals.

The paper is organized as follows. In the reminder of this sections, we introduce some notations that are used in this paper. In Sects.~\ref{sect2} and \ref{sect3}, we provide some preliminary results and prove Theorem~\ref{th1}, respectively.

\emph{Notations.} Throughout this paper, we identify $\mathcal M_r$ with the Banach algebra of linear operators in $\mathbb C^r$ endowed with the standard norm. If there is no ambiguity, we write simply $\|\cdot\|$ for norms of operators and matrices.

We denote by $L_2((a,b),\mathcal M_r)$ the Banach space of all strongly measurable functions $f:(a,b)\to\mathcal M_r$ for which the norm
$$
\|f\|_{L_2}:=\left(\int_a^b \|f(t)\|^2\mathrm dt\right)^{1/2}
$$
is finite. We denote by $G_2(\mathcal M_r)$ the set of all measurable functions $K:[0,1]^2\to \mathcal M_r$ such that for all $x,t\in[0,1]$, the functions $K(x,\cdot)$ and $K(\cdot,t)$ belong to $L_2((0,1),\mathcal M_r)$ and, moreover, the mappings
$[0,1]\ni x\mapsto K(x,\cdot)\in L_2((0,1),\mathcal M_r)$ and
$[0,1]\ni t\mapsto K(\cdot,t)\in L_2((0,1),\mathcal M_r)$
are continuous. We denote by $G_2^+(\mathcal M_r)$ the set of all functions $K\in G_2(\mathcal M_r)$ such that $K(x,t)=0$ a.e. in the triangle $\Omega_-:=\{(x,t)\mid0<x<t<1\}$.
The superscript $\top$ designates the transposition of vectors and matrices.

\section{Preliminary results}\label{sect2}

In this section, we obtain some preliminary results and introduce some auxiliary objects that will be used in this paper.

For an arbitrary potential $Q\in\mathfrak Q_2$ and $\lambda\in\mathbb C$, we denote by $Y_Q(\cdot,\lambda)\in W_2^1((0,1),\mathcal M_{2r})$ the $2r\times2r$ matrix-valued solution of the Cauchy problem
\begin{equation}\label{YCauchyProbl}
J\frac{\mathrm d}{\mathrm dx} Y+QY=\lambda Y,\qquad Y(0,\lambda)=I_{2r}.
\end{equation}
We set
$\varphi_Q(\cdot,\lambda):=Y_Q(\cdot,\lambda)Ja^*$ and $\psi_Q(\cdot,\lambda):=Y_Q(\cdot,\lambda)a^*$, where $a:=\frac1{\sqrt2}\begin{pmatrix}I,&-I\end{pmatrix}$,
so that $\varphi_Q(\cdot,\lambda)$ and $\psi_Q(\cdot,\lambda)$ are the $2r\times r$ matrix-valued solutions of the Cauchy problems
\begin{equation}\label{phiCauchyProbl}
J\frac{\mathrm d}{\mathrm dx}\varphi+Q\varphi=\lambda\varphi,\qquad \varphi(0,\lambda)=Ja^*,
\end{equation}
and
$$
J\frac{\mathrm d}{\mathrm dx}\psi+Q\psi=\lambda\psi,\qquad \psi(0,\lambda)=a^*,
$$
respectively. For an arbitrary $\lambda\in\mathbb C$, we introduce the operator $\Phi_Q(\lambda):\mathbb C^r\to\mathbb H$ by the formula
$$
[\Phi_Q(\lambda)c](x):=\varphi_Q(x,\lambda)c,\qquad x\in[0,1].
$$

We set $s_Q(\lambda):=a\varphi_Q(1,\lambda)$ and $c_Q(\lambda):=a\psi_Q(1,\lambda)$,
$\lambda\in\mathbb C$.
The function
$$
m_Q(\lambda):=-s_Q(\lambda)^{-1}c_Q(\lambda)
$$
will be called the \emph{Weyl--Titchmarsh function} of the operator $T_Q$.
Note that in the free case $Q=0$ one has $s_0(\lambda)=(\sin\lambda)I$, $c_0(\lambda)=(\cos\lambda)I$ and $m_0(\lambda)=-(\cot\lambda)I$.

The following proposition is a straightforward analogue of Lemma~2.1 in \cite{dirac1}:

\begin{proposition}\label{scProp}
For an arbitrary potential $Q\in\mathfrak Q_2$ it holds:
\begin{itemize}
\item[(i)]there exists a unique function $K_Q\in G_2^+(\mathcal M_{2r})$ such that for every $x\in[0,1]$ and $\lambda\in\mathbb C$,
    $$
    \varphi_Q(x,\lambda)=\varphi_0(x,\lambda)+\int_0^x K_Q(x,s)\varphi_0(s,\lambda)\ \mathrm d s,
    $$
    where $\varphi_0(\cdot,\lambda)$ is a solution of (\ref{phiCauchyProbl}) in the free case $Q=0$;
\item[(ii)]there exist unique functions $f_1:=f_{Q,1}$ and $f_2:=f_{Q,2}$ from $L_2((-1,1),\mathcal M_r)$ such that for every $\lambda\in\mathbb C$,
    \begin{equation}\label{scRepr}
    s_Q(\lambda)=(\sin\lambda)I+\frac1{\sqrt2}\int_{-1}^1 \mathrm e^{\mathrm i\lambda s}f_1(s)\ \mathrm d s,\quad
    c_Q(\lambda)=(\cos\lambda)I+\frac1{\sqrt2}\int_{-1}^1 \mathrm e^{\mathrm i\lambda s}f_2(s)\ \mathrm d s.
    \end{equation}
\end{itemize}
\end{proposition}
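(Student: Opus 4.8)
The plan is to exhibit $I+K_Q$, viewed as a Volterra integral operator acting in the variable $x$, as a transformation operator intertwining the free Dirac operator and $T_Q$, and then to deduce (ii) from the resulting representation. Two structural facts drive everything: the defining anticommutation $JQ(x)=-Q(x)J$ and the identity $J^2=-I_{2r}$. The latter makes the free solutions explicit, $Y_0(x,\lambda)=\mathrm e^{-\lambda Jx}=(\cos\lambda x)I_{2r}-(\sin\lambda x)J$, whence $\varphi_0(x,\lambda)=(\cos\lambda x)Ja^*+(\sin\lambda x)a^*$ and $\psi_0(x,\lambda)=(\cos\lambda x)a^*-(\sin\lambda x)Ja^*$.

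For (i), I would first substitute the ansatz $\varphi_Q(x,\lambda)=\varphi_0(x,\lambda)+\int_0^x K(x,s)\varphi_0(s,\lambda)\,\mathrm ds$ into the Cauchy problem (\ref{phiCauchyProbl}). After differentiating, multiplying by $J$ and using $\lambda\varphi_0(s,\lambda)=J\partial_s\varphi_0(s,\lambda)$ to integrate by parts in $s$, I would compare the three types of contributions that arise: the value at $s=x$ forces the diagonal relation $JK(x,x)-K(x,x)J=-Q(x)$; the value at $s=0$ forces the boundary condition $K(x,0)a^*=0$; and the integrand must vanish, yielding the Goursat equation $J\partial_xK(x,s)+\partial_sK(x,s)J+Q(x)K(x,s)=0$ on the triangle $\{0<s<x<1\}$. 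This separation is justified because the $2r\times 2r$ matrix whose block columns are $a^*$ and $Ja^*$ is invertible, so that $\int_0^x C(s)\varphi_0(s,\lambda)\,\mathrm ds=0$ for all $\lambda$ forces $C\equiv0$; the same remark immediately gives uniqueness of $K_Q$.

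To construct a solution I would pass to characteristic coordinates, integrate the Goursat equation along characteristics so as to absorb the diagonal and boundary data, and solve the resulting system of Volterra integral equations by successive approximations. The iterates are controlled by the convolution powers of $\|Q\|$, which sum to a convergent series; the essential point is to carry out these estimates in the norm $\sup_x\|K(x,\cdot)\|_{L_2}$ (and the symmetric norm in the second variable), so that the limit belongs to $G_2^+(\mathcal M_{2r})$ and has the continuous sections required by that space. Securing convergence and this regularity for a merely square-integrable $Q$ (rather than a continuous one) is the main obstacle, and it is precisely the place where the argument of \cite[Lemma~2.1]{dirac1} is reused; since self-adjointness of $Q$ is never invoked there, the construction transfers verbatim to the present non-self-adjoint setting.

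For (ii), I would observe that the same kernel transforms the entire fundamental matrix, $Y_Q(x,\lambda)=Y_0(x,\lambda)+\int_0^x K_Q(x,s)Y_0(s,\lambda)\,\mathrm ds$, since both sides solve (\ref{YCauchyProbl}) with the initial value $I_{2r}$; right-multiplying by $a^*$ gives the companion formula $\psi_Q(x,\lambda)=\psi_0(x,\lambda)+\int_0^x K_Q(x,s)\psi_0(s,\lambda)\,\mathrm ds$. Taking $x=1$ and applying $a$ then produces $s_Q(\lambda)=(\sin\lambda)I+\int_0^1 aK_Q(1,s)\varphi_0(s,\lambda)\,\mathrm ds$ and the analogous expression for $c_Q$. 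It remains to insert the explicit free solutions, write $\cos\lambda s$ and $\sin\lambda s$ through $\mathrm e^{\pm\mathrm i\lambda s}$, and fold the $\mathrm e^{-\mathrm i\lambda s}$ contribution onto $(-1,0)$ via $s\mapsto-s$; because $K_Q(1,\cdot)\in L_2((0,1),\mathcal M_{2r})$, this yields functions $f_1,f_2\in L_2((-1,1),\mathcal M_r)$ of the stated form, whose uniqueness follows from the completeness of $\{\mathrm e^{\mathrm i\lambda s}\}$.
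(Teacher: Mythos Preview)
Your approach coincides with the paper's: the paper does not give an independent proof of this proposition at all, but simply states it as ``a straightforward analogue of Lemma~2.1 in \cite{dirac1}'', and your sketch is precisely an outline of that transformation-operator argument, including the explicit remark that self-adjointness is never used there. One small point to tighten: in part (i) you derive from the $\varphi$-ansatz only the boundary condition $K(x,0)a^*=0$, but in part (ii) you invoke that the \emph{same} kernel intertwines the full fundamental matrix $Y_Q$, which requires the stronger condition $K(x,0)=0$; the cleanest fix is to run the Goursat construction for $Y_Q$ from the outset (imposing $K(x,0)=0$), and then specialize to $\varphi_Q$ and $\psi_Q$ by right-multiplying by $Ja^*$ and $a^*$.
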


\noindent In particular, Proposition~\ref{scProp} implies the following corollary:

\begin{corollary}\label{scCor}
For an arbitrary $Q\in\mathfrak Q_2$ and $\lambda\in\mathbb C$,
\begin{equation}\label{phiTrOp}
\Phi_Q(\lambda)=(\mathcal I+\mathcal K_Q)\Phi_0(\lambda),
\end{equation}
where $\mathcal K_Q$ is the integral operator with kernel $K_Q$ and $\mathcal I$ is the identity operator in $\mathbb H$.
\end{corollary}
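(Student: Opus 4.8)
The plan is to verify the operator identity directly by applying both sides to an arbitrary vector $c\in\mathbb C^r$, evaluating at a point $x\in[0,1]$, and invoking the pointwise integral representation from Proposition~\ref{scProp}(i). No deep machinery is needed; the corollary is essentially a reformulation of the transformation-operator representation as an operator factorization.

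First I would fix $c\in\mathbb C^r$ and $\lambda\in\mathbb C$ and recall that, by definition, $[\Phi_Q(\lambda)c](x)=\varphi_Q(x,\lambda)c$. Substituting the representation of $\varphi_Q$ from part~(i) of Proposition~\ref{scProp} gives
$$
[\Phi_Q(\lambda)c](x)=\varphi_0(x,\lambda)c+\int_0^x K_Q(x,s)\varphi_0(s,\lambda)c\,\mathrm d s .
$$
Here the first summand is exactly $[\Phi_0(\lambda)c](x)$, so it remains to identify the integral term with $[\mathcal K_Q\Phi_0(\lambda)c](x)$.

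Next I would use the inclusion $K_Q\in G_2^+(\mathcal M_{2r})$: since $K_Q(x,s)=0$ for $0<x<s<1$, the upper limit of integration may be replaced by $1$ without changing the value, so that
$$
\int_0^x K_Q(x,s)\varphi_0(s,\lambda)c\,\mathrm d s=\int_0^1 K_Q(x,s)[\Phi_0(\lambda)c](s)\,\mathrm d s=[\mathcal K_Q\Phi_0(\lambda)c](x),
$$
which is precisely the action of the integral operator $\mathcal K_Q$ on the function $\Phi_0(\lambda)c\in\mathbb H$. Because $K_Q\in G_2(\mathcal M_{2r})$ has a square-integrable kernel on $[0,1]^2$, the operator $\mathcal K_Q$ is well defined (indeed Hilbert--Schmidt) in $\mathbb H$, so all these manipulations are legitimate. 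Combining the two displays yields $[\Phi_Q(\lambda)c](x)=[(\mathcal I+\mathcal K_Q)\Phi_0(\lambda)c](x)$ for a.e.\ $x\in[0,1]$.

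Finally, since $c\in\mathbb C^r$ was arbitrary, this establishes the operator identity $\Phi_Q(\lambda)=(\mathcal I+\mathcal K_Q)\Phi_0(\lambda)$. I expect essentially no obstacle in this argument; the only point requiring a word of care is the replacement of the upper limit $x$ by $1$ in the integral, which is justified exactly by the support condition built into the definition of $G_2^+(\mathcal M_{2r})$.
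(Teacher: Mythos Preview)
Your argument is correct and is precisely the intended one: the paper offers no separate proof but simply states that the corollary follows from Proposition~\ref{scProp}(i), and your computation is exactly the natural unpacking of that implication (apply both sides to $c$, use the Volterra representation of $\varphi_Q$, and extend the upper limit via the support condition in $G_2^+(\mathcal M_{2r})$).
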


\noindent Using the first formula in (\ref{scRepr}) and repeating the proof of Theorem~3 in \cite{trushzeros}, one can also derive the following:
\begin{corollary}\label{zerosCor}
The set of zeros of the entire function $\widetilde s_Q(\lambda):=\det s_Q(\lambda)$ can be indexed (counting multiplicities) by numbers $n\in\mathbb Z$ so that the corresponding sequence $(\xi_n)_{n\in\mathbb Z}$ has the asymptotics
$$
\xi_{kr+j}=\pi k+\omega_{j,k},\qquad k\in\mathbb Z,\quad j=0,\ldots,r-1,
$$
where the sequences $(\omega_{j,k})_{k\in\mathbb Z}$ belong to $\ell_2(\mathbb Z)$.
\end{corollary}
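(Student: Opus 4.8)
The plan is to unfold the determinant and reduce the claim to the scalar statement of \cite{trushzeros}. Writing $s_Q(\lambda)=(\sin\lambda)I+g(\lambda)$ with $g(\lambda):=\frac1{\sqrt2}\int_{-1}^1 e^{\mathrm i\lambda s}f_1(s)\,\mathrm d s$ as in (\ref{scRepr}), I would expand the characteristic determinant as
$$
\widetilde s_Q(\lambda)=\det\bigl((\sin\lambda)I+g(\lambda)\bigr)=(\sin\lambda)^r+\sum_{m=1}^r(\sin\lambda)^{r-m}M_m(\lambda),
$$
where $M_m(\lambda)$ denotes the sum of the $m\times m$ principal minors of $g(\lambda)$. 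The first step is to record that each $M_m$ inherits a Fourier representation of the type in (\ref{scRepr}): every entry of $g$ is the transform of an $L_2((-1,1),\mathbb C)$ density, an $m\times m$ minor is a finite sum of products of $m$ such entries, and a product of transforms is the transform of a convolution of the densities; since a convolution of $L_2$-functions supported on $(-1,1)$ is continuous and supported on $(-m,m)$, one obtains $M_m(\lambda)=\int_{-m}^{m}e^{\mathrm i\lambda s}w_m(s)\,\mathrm d s$ with $w_m\in L_2$. This is exactly the structure handled by Theorem~3 of \cite{trushzeros}, and the remainder of the argument reproduces that proof; I indicate its three stages.

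Second, I would localise the zeros by a counting argument. In the free case $\widetilde s_0(\lambda)=(\sin\lambda)^r$ has a zero of multiplicity exactly $r$ at each $\pi k$. On the circle $\{|\lambda-\pi k|=\delta\}$ one has $|(\sin\lambda)^r|\ge c\,\delta^r$ with $c>0$ independent of $k$, while the Riemann--Lebesgue lemma shows that the perturbation $\sum_{m\ge1}(\sin\lambda)^{r-m}M_m(\lambda)$ tends to $0$ as $|\Re\lambda|\to\infty$, uniformly in each horizontal strip. Rouché's theorem then gives, for all but finitely many $k$, exactly $r$ zeros of $\widetilde s_Q$ (with multiplicity) in the disc of radius $\delta$ about $\pi k$; letting $\delta\downarrow0$ shows each such cluster converges to $\pi k$. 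Assigning the finitely many remaining zeros to the finitely many exceptional clusters produces the indexing $\xi_{kr+j}=\pi k+\omega_{j,k}$, $j=0,\dots,r-1$, with $\omega_{j,k}\to0$; the finite part does not affect the $\ell_2$ conclusion.

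Third, and this is the crux, I would upgrade $\omega_{j,k}\to0$ to $(\omega_{j,k})_k\in\ell_2(\mathbb Z)$. Setting $\lambda=\pi k+z$ and using $\sin(\pi k+z)=\pm(z+O(z^3))$, the cluster at $\pi k$ consists of the small roots $z=\omega_{j,k}$ of an analytic function whose Taylor coefficients of orders $0,\dots,r-1$ are supplied entirely by the perturbation; hence the elementary symmetric functions $e_m$ of $\{\omega_{j,k}\}_{j=0}^{r-1}$ agree, up to bounded factors and higher-order corrections, with the sampled derivatives $M_l^{(b)}(\pi k)$ for indices $l\ge m$. The decisive observation is that each such value is a finite sum of products of $l$ factors, every factor being the value at $\pi k$ of the Fourier transform of an $L_2((-1,1))$ density and hence an $\ell_2(\mathbb Z)$-sequence in $k$; Hölder's inequality then gives $e_m\in\ell^{2/m}(\mathbb Z)$. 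Combined with the elementary root bound $\max_j|\omega_{j,k}|\le 2\max_{1\le m\le r}|e_m|^{1/m}$, this yields
$$
\sum_k\sum_{j}|\omega_{j,k}|^2\le C\sum_{m=1}^r\sum_k|e_m|^{2/m}<\infty,
$$
so that each $(\omega_{j,k})_k\in\ell_2(\mathbb Z)$.

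I expect the main obstacle to be precisely this last step. Since the $r$ zeros of a cluster may coincide or nearly coincide, they cannot be followed as $r$ separate simple zeros by the implicit function theorem, and the naive estimate $|\omega_{j,k}|\lesssim|\det g(\pi k)|^{1/r}$ is too weak to be summable. The resolution is the matching between the root bound in terms of symmetric functions and the gain of integrability in Hölder's inequality: a higher minor $M_m$ contributes an extra factor, improving $\ell_2$ to $\ell^{2/m}$, which compensates exactly for the $1/m$-th power in the root bound. Checking that the minors genuinely carry this product-of-$L_2$-transforms structure, so that Hölder applies, is the one technical point, and it follows from the convolution identity noted in the first step.
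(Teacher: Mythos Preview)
Your proposal is correct and matches the paper's approach: the paper gives no argument of its own but simply says to use the representation~(\ref{scRepr}) and repeat the proof of Theorem~3 in \cite{trushzeros}, and your sketch is a faithful outline of precisely that argument --- the determinant expansion into principal minors, the convolution structure of the minors, Rouch\'e localisation, and the matching between the $\ell^{2/m}$ gain from H\"older and the $1/m$-th root in the elementary root bound. The one place you should tighten is the passage from Taylor coefficients of the analytic function $F(z)=\widetilde s_Q(\pi k+z)$ to the elementary symmetric functions $e_m$ of its zeros: since $F$ is not a polynomial, you need Weierstrass preparation (or an equivalent device) to write $F=P_kU$ with $P_k$ monic of degree $r$ and $U$ a unit with uniformly bounded Taylor coefficients, after which reverse induction gives $|e_m|\le C\sum_{l\ge m}|a_{r-l}|$ and your H\"older argument applies verbatim.
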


Now let $\rho(T_Q)$ denote the resolvent set of the operator $T_Q$.

\begin{lemma}\label{dirProp}
For an arbitrary $Q\in\mathfrak Q_2$ it holds $\rho(T_Q)=\{\lambda\in\mathbb C\mid\ker s_Q(\lambda)=\{0\}\}$ and for each $\lambda\in\rho(T_Q)$,
\begin{equation}\label{resolv}
(T_Q-\lambda\mathcal I)^{-1}=\Phi_Q(\lambda)m_Q(\lambda)\Phi_{Q^*}(\overline\lambda)^*
+\mathcal T_Q(\lambda),
\end{equation}
where $\mathcal T_Q$ is an entire operator-valued function. The spectrum of the operator $T_Q$ consists of countably many isolated eigenvalues of finite algebraic multiplicities.
\end{lemma}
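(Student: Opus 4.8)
The plan is to construct the resolvent explicitly as a two-sided Green operator and then read off the two summands in (\ref{resolv}).

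First I would record that the boundary conditions defining $D(T_Q)$ are exactly $ay(0)=0$ and $ay(1)=0$, since $ay=\tfrac1{\sqrt2}(y_1-y_2)$. Because $a\varphi_Q(0,\lambda)=aJa^*=0$ and the columns of $\varphi_Q(0,\lambda)=Ja^*$ span $\ker a$, the columns of $\varphi_Q(\cdot,\lambda)$ form a basis of the $r$-dimensional space of solutions of (\ref{phiCauchyProbl}) meeting the condition at $0$. Hence $\lambda$ is an eigenvalue iff $\varphi_Q(\cdot,\lambda)c$ meets the condition at $1$ for some $c\neq0$, i.e. iff $s_Q(\lambda)c=a\varphi_Q(1,\lambda)c=0$ has a nontrivial solution, i.e. iff $\ker s_Q(\lambda)\neq\{0\}$. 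This gives the inclusion $\rho(T_Q)\subseteq\{\lambda\mid\ker s_Q(\lambda)=\{0\}\}$; the reverse inclusion follows once the resolvent is constructed on that set.

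The key algebraic tool is a Lagrange identity. Differentiating $Y_{Q^*}(x,\overline\lambda)^*JY_Q(x,\lambda)$ and using the defining relation $JQ=-QJ$ of $\mathfrak Q_2$ together with $J^2=-I$ and the Cauchy data $Y_Q(0,\cdot)=Y_{Q^*}(0,\cdot)=I_{2r}$, I would show this product is constant, whence
\begin{equation*}
Y_{Q^*}(x,\overline\lambda)^*\,J\,Y_Q(x,\lambda)=J,\qquad\text{so}\qquad Y_Q(x,\lambda)^{-1}(-J)=-J\,Y_{Q^*}(x,\overline\lambda)^*.
\end{equation*}
(That $Y_Q(x,\lambda)$ is invertible and entire in $\lambda$ follows from $\det Y_Q\equiv1$ by Liouville's formula, since $\operatorname{tr}\bigl(J(\lambda-Q)\bigr)=0$.) I would also note that the Weyl solution $\Psi_Q(\cdot,\lambda):=\psi_Q(\cdot,\lambda)+\varphi_Q(\cdot,\lambda)m_Q(\lambda)$ meets the condition at $1$, because $a\Psi_Q(1,\lambda)=c_Q(\lambda)+s_Q(\lambda)m_Q(\lambda)=0$. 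Next I would form the Green kernel $G(x,t,\lambda)$ of $(T_Q-\lambda\mathcal I)^{-1}$ in the usual two-sided way, using $\varphi_Q$ for $x<t$ and $\Psi_Q$ for $x>t$, with coefficient matrices fixed by the jump relation $G(t^{+},t,\lambda)-G(t^{-},t,\lambda)=J^{-1}$. Solving this $2r\times2r$ relation — this is where invertibility of $s_Q(\lambda)$ is used, and where the Lagrange identity together with the elementary relations $aa^*=I$, $aJa^*=0$, $\varphi_{Q^*}(t,\overline\lambda)^*=-aJY_{Q^*}(t,\overline\lambda)^*$, $\psi_{Q^*}(t,\overline\lambda)^*=aY_{Q^*}(t,\overline\lambda)^*$ convert the coefficients into solutions of the adjoint problem — yields
\begin{equation*}
G(x,t,\lambda)=\varphi_Q(x,\lambda)\,m_Q(\lambda)\,\varphi_{Q^*}(t,\overline\lambda)^*+
\begin{cases}
\varphi_Q(x,\lambda)\,\psi_{Q^*}(t,\overline\lambda)^*,& x<t,\\[2pt]
\psi_Q(x,\lambda)\,\varphi_{Q^*}(t,\overline\lambda)^*,& x>t.
\end{cases}
\end{equation*}
The first summand is precisely the kernel of $\Phi_Q(\lambda)m_Q(\lambda)\Phi_{Q^*}(\overline\lambda)^*$, while the piecewise remainder defines $\mathcal T_Q(\lambda)$; since it is assembled solely from the Cauchy solutions $\varphi_Q,\psi_Q,\varphi_{Q^*},\psi_{Q^*}$, which depend holomorphically on $\lambda$, the operator $\mathcal T_Q$ is entire. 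Checking that $u=\int_0^1G(\cdot,t,\lambda)g(t)\,dt$ lies in $D(T_Q)$ and satisfies $(T_Q-\lambda)u=g$ is then routine, and yields both (\ref{resolv}) and the reverse inclusion for $\rho(T_Q)$.

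Finally, for the spectral statement I would observe that in (\ref{resolv}) only $m_Q(\lambda)=-s_Q(\lambda)^{-1}c_Q(\lambda)$ is singular, with poles confined to the zeros of $\widetilde s_Q(\lambda)=\det s_Q(\lambda)$. By Corollary~\ref{zerosCor} these zeros form a countable set with no finite accumulation point, so $\sigma(T_Q)$ is discrete; and since the singular summand factors through the finite-dimensional space $\mathbb C^r$, the principal part of the Laurent expansion of the resolvent at each such point is finite rank, whence every eigenvalue has finite algebraic multiplicity. I expect the main obstacle to be the middle step: establishing the Lagrange identity (which hinges crucially on $JQ=-QJ$) and carrying out the $a,a^*,J$ bookkeeping that turns the jump relation into the clean symmetric kernel displayed above.
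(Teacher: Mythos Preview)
Your proposal is correct and takes essentially the same approach as the paper: both derive the Lagrange-type identity $Y_{Q^*}(x,\overline\lambda)^*JY_Q(x,\lambda)=J$ (which the paper records in the equivalent form $\varphi_Q\psi_{Q^*}^*-\psi_Q\varphi_{Q^*}^*=J$) and use it to write down the same explicit Green kernel. The only differences are cosmetic --- the paper verifies directly that the piecewise integral $\mathcal T_Q(\lambda)f$ solves the inhomogeneous Cauchy problem with the condition at $0$ and then adds the correction $\varphi_Q(\cdot,\lambda)c$ to meet the condition at $1$ (rather than packaging this via the Weyl solution and jump relation), and for finite algebraic multiplicity it invokes compactness of the resolvent together with a Gohberg--Krein reference instead of your observation that the singular summand factors through $\mathbb C^r$.
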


\begin{proof}
A direct verification shows that
$$
\frac{\mathrm d}{\mathrm dx}\left(JY_{Q^*}(x,\overline\lambda)^*JY_Q(x,\lambda)\right)=0.
$$
Therefore, taking into account (\ref{YCauchyProbl}), we find that $-JY_{Q^*}(x,\overline\lambda)^*JY_Q(x,\lambda)=I_{2r}$ for every $x\in[0,1]$ and thus
$$
Y_Q(x,\lambda)JY_{Q^*}(x,\overline\lambda)^*=J,\qquad x\in[0,1].
$$
Since $J=Ja^*a+a^*aJ$, the latter can be rewritten as
\begin{equation}\label{ResolvAuxEq1}
\varphi_Q(x,\lambda)\psi_{Q^*}(x,\overline\lambda)^*-\psi_Q(x,\lambda)\varphi_{Q^*}(x,\overline\lambda)^*
=J,\qquad x\in[0,1].
\end{equation}

Using (\ref{ResolvAuxEq1}), one can verify that for an arbitrary $f\in\mathbb H$ and $\lambda\in\mathbb C$, the function
$$
g(x,\lambda)
=[\mathcal T_Q(\lambda)f](x):=\psi_Q(x,\lambda)\int_0^x\varphi_{Q^*}(t,\overline\lambda)^*f(t)\ \mathrm dt+
\varphi_Q(x,\lambda)\int_x^1\psi_{Q^*}(t,\overline\lambda)^*f(t)\ \mathrm dt
$$
solves the Cauchy problem
\begin{equation}\label{ResolvAuxEq2}
Jy'+Qy=\lambda y+f,\qquad y_1(0)=y_2(0).
\end{equation}
Since for every $c\in\mathbb C^r$, the function $h(\cdot,\lambda):=\varphi_Q(\cdot,\lambda)c$ solves (\ref{ResolvAuxEq2}) with $f=0$, it then follows that a generic solution of (\ref{ResolvAuxEq2}) takes the form $y=\varphi_Q(\cdot,\lambda)c+\mathcal T_Q(\lambda)f$, $c\in\mathbb C^r$.
If $\lambda\in\mathbb C$ is such that the $r\times r$ matrix $s_Q(\lambda):=a\varphi_Q(1,\lambda)$ is non-singular, then the choice
$$
c=-s_Q(\lambda)^{-1}c_Q(\lambda)\int_0^1\varphi_{Q^*}(t,\overline\lambda)^*f(t)\ \mathrm dt
$$
implies that $ay(1)=0$, i.e. $y_1(1)=y_2(1)$. Therefore, every $\lambda\in\mathbb C$ such that $\ker s_Q(\lambda)=\{0\}$ is a resolvent point of the operator $T_Q$ and for such $\lambda$ it holds
$$
(T_Q-\lambda\mathcal I)^{-1}=\Phi_Q(\lambda)m_Q(\lambda)\Phi_{Q^*}(\overline\lambda)^*+\mathcal T_Q(\lambda).
$$

To complete the proof, it remains to observe that the function $y=\varphi_Q(\cdot,\lambda)c$ is a non-zero solution of the problem
$$Jy'+Qy=\lambda y, \qquad  y_1(0)=y_2(0), \quad y_1(1)=y_2(1)$$
if and only if $c\in\ker s_Q(\lambda)\setminus\{0\}$.
Since the values of the resolvent of the operator $T_Q$ are compact operators, it follows that all spectral projectors $P_{\lambda_j}$, $j\in\mathbb Z$, are finite dimensional. In particular, it then follows (see, e.g., \cite[Theorem~2.2]{kreinnsa}) that all eigenvalues of the operator $T_Q$ are of finite algebraic multiplicities.
\end{proof}

From Lemma~\ref{dirProp} we obtain that eigenvalues of the operator $T_Q$ are zeros of the entire function $\widetilde s_Q(\lambda):=\det s_Q(\lambda)$. In view of Corollary~\ref{zerosCor} we then arrive at the following:

\begin{corollary}\label{asCor}
For an arbitrary potential $Q\in\mathfrak Q_2$, eigenvalues of the operator $T_Q$ satisfy the condition (\ref{lambdaCond}) and the asymptotics (\ref{lambdaAsymp}).
\end{corollary}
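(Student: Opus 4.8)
The plan is to transfer the asymptotic information about the zeros of $\widetilde s_Q$ supplied by Corollary~\ref{zerosCor} into the strip-counting statements (\ref{lambdaCond}) and (\ref{lambdaAsymp}). By Lemma~\ref{dirProp} the eigenvalues of $T_Q$ coincide with the zeros of the entire function $\widetilde s_Q(\lambda)=\det s_Q(\lambda)$, so it suffices to locate those zeros. Since both (\ref{lambdaCond}) and (\ref{lambdaAsymp}) are sums over the \emph{distinct} eigenvalues $\lambda_j$, and each such sum is dominated by the corresponding sum over the zeros $\xi_n$ counted \emph{with multiplicities} (every distinct eigenvalue equals some $\xi_n$, and its contribution occurs at least once in the multiplicity-counted sum), I would argue directly with the sequence $(\xi_n)_{n\in\mathbb Z}$ from Corollary~\ref{zerosCor} and let the bounds descend to the distinct eigenvalues.

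First I would use that for each fixed $j\in\{0,\ldots,r-1\}$ the membership $(\omega_{j,k})_{k\in\mathbb Z}\in\ell_2(\mathbb Z)$ forces $\omega_{j,k}\to0$ as $|k|\to\infty$. Hence there is an $N$ such that $|\Re\omega_{j,k}|<\pi/2$ for all $|k|\ge N$ and all $j$, which places $\Re\xi_{kr+j}=\pi k+\Re\omega_{j,k}$ in the interval $(\pi k-\pi/2,\pi k+\pi/2]$; that is, $\xi_{kr+j}\in\Delta_k$. Consequently, for every $|k|\ge N$ the strip $\Delta_k$ contains precisely the $r$ zeros $\xi_{kr},\ldots,\xi_{kr+r-1}$ (with multiplicity) and no others, while the finitely many zeros $\xi_{kr+j}$ with $|k|<N$ lie in a bounded region and hence occupy only finitely many strips. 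This bounds the number of zeros, and a fortiori the number of distinct eigenvalues, in each strip by $r$ once $|k|\ge N$, and by a fixed finite count in the remaining strips, yielding (\ref{lambdaCond}).

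For (\ref{lambdaAsymp}) I would split the outer sum into $|n|\ge N$ and $|n|<N$. The range $|n|<N$ contributes only finitely many finite terms. For $|n|=|k|\ge N$ the zeros lying in $\Delta_k$ are exactly $\xi_{kr+j}$, $j=0,\ldots,r-1$, so the inner sum over distinct eigenvalues is bounded by $\sum_{j=0}^{r-1}|\xi_{kr+j}-\pi k|^2=\sum_{j=0}^{r-1}|\omega_{j,k}|^2$. Summing over $k$ and interchanging the finite sum over $j$ with the sum over $k$ gives $\sum_{j=0}^{r-1}\sum_{k\in\mathbb Z}|\omega_{j,k}|^2<\infty$, since each $(\omega_{j,k})_k$ lies in $\ell_2(\mathbb Z)$. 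Combining both ranges establishes (\ref{lambdaAsymp}).

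The only delicate point is the bookkeeping at the strip boundaries together with the passage from zeros-with-multiplicity to distinct eigenvalues: one must check that for large $|k|$ no zero indexed by $k$ drifts out of its natural strip $\Delta_k$ (guaranteed by $\Re\omega_{j,k}\to0$), that the finitely many small-$|k|$ zeros are absorbed into the finite part of the sum, and that the distinct-eigenvalue sums are legitimately dominated by the multiplicity-counted ones. Once these are in place the remaining estimates are routine.
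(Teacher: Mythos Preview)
Your proposal is correct and follows exactly the route the paper indicates: the paper does not give a detailed proof but simply observes, in the sentence preceding the corollary, that by Lemma~\ref{dirProp} the eigenvalues are the zeros of $\widetilde s_Q$, and then invokes Corollary~\ref{zerosCor}. You have spelled out the elementary bookkeeping (localizing $\xi_{kr+j}$ to $\Delta_k$ via $\omega_{j,k}\to0$, handling the finitely many exceptional indices, and dominating the distinct-eigenvalue sums by the multiplicity-counted ones) that the paper leaves implicit.
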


Now we can introduce the spectral projectors of the operator $T_Q$ as explained in the previous section. Formulas (\ref{phiTrOp}) and (\ref{resolv}) will serve as an efficient tool to prove Theorem~\ref{th1}.

\section{Proof of Theorem~\ref{th1}}\label{sect3}

Now we are ready to prove Theorem~\ref{th1}. We start with the following auxiliary lemma:

\begin{lemma}\label{auxLemma}
For an arbitrary $\lambda\in\mathbb C$, let the operator $A(\lambda):L_2((-1,1),\mathcal M_r)\to\mathcal M_r$ act by the formula
$$
A(\lambda)f:=\frac1{\sqrt2}\int_{-1}^1 \mathrm e^{\mathrm i\lambda t}f(t)\,\mathrm dt.
$$
Then for an arbitrary $f\in L_2((-1,1),\mathcal M_r)$ and $\lambda\in\mathbb T_0:=\{\lambda\in\mathbb C\mid|\lambda|=1\}$,
\begin{equation}\label{auxLemmaEq}
\sum_{n\in\mathbb Z}\|A(\pi n+\lambda)f\|^2\le 9r\|f\|_{L_2}^2.
\end{equation}
\end{lemma}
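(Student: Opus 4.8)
The plan is to reduce the matrix-valued estimate to a scalar one entry by entry, and to recognize the quantities $A(\pi n+\lambda)f$ as Fourier coefficients of a single auxiliary function. First I would observe that for each fixed $\lambda$ the $(i,j)$ entry of the matrix $A(\pi n+\lambda)f$ equals $A(\pi n+\lambda)f_{ij}$, where $f_{ij}\in L_2((-1,1),\mathbb C)$ is the corresponding scalar entry of $f$ (each entry lies in $L_2$ because $|f_{ij}(t)|\le\|f(t)\|$). Since the operator norm is dominated by the Hilbert--Schmidt norm, $\sum_{n}\|A(\pi n+\lambda)f\|^2\le\sum_{n}\sum_{i,j}|A(\pi n+\lambda)f_{ij}|^2=\sum_{i,j}\sum_n|A(\pi n+\lambda)f_{ij}|^2$, so it suffices to estimate each scalar inner sum.

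For the scalar step, I would write $A(\pi n+\lambda)f_{ij}=\tfrac1{\sqrt2}\int_{-1}^1 e^{\mathrm i\pi n t}\,g(t)\,\mathrm dt$ with $g(t):=e^{\mathrm i\lambda t}f_{ij}(t)\in L_2((-1,1),\mathbb C)$. The functions $\{\tfrac1{\sqrt2}e^{\mathrm i\pi n t}\}_{n\in\mathbb Z}$ form an orthonormal system (in fact a basis) in $L_2((-1,1),\mathbb C)$, so $A(\pi n+\lambda)f_{ij}$ is, up to the sign of the index, the $n$-th Fourier coefficient of $g$. Bessel's inequality (or Parseval's identity) then yields $\sum_n|A(\pi n+\lambda)f_{ij}|^2\le\|g\|_{L_2}^2=\int_{-1}^1|e^{\mathrm i\lambda t}|^2|f_{ij}(t)|^2\,\mathrm dt$. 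The only place where the restriction $\lambda\in\mathbb T_0$ enters is the pointwise bound on the weight: writing $\lambda=\mu+\mathrm i\nu$ with $\mu^2+\nu^2=1$ gives $|e^{\mathrm i\lambda t}|^2=e^{-2\nu t}\le e^{2|\nu||t|}\le e^2$ for all $t\in[-1,1]$, since $|\nu|\le1$.

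Reassembling, I would sum the scalar bounds over the entries to get $\sum_n\|A(\pi n+\lambda)f\|^2\le e^2\int_{-1}^1\sum_{i,j}|f_{ij}(t)|^2\,\mathrm dt=e^2\int_{-1}^1\|f(t)\|_{\mathrm{HS}}^2\,\mathrm dt$, and then use $\|M\|_{\mathrm{HS}}^2\le r\|M\|^2$ for $r\times r$ matrices to obtain $\sum_n\|A(\pi n+\lambda)f\|^2\le e^2 r\int_{-1}^1\|f(t)\|^2\,\mathrm dt=e^2 r\|f\|_{L_2}^2$. Finally $e^2<9$ gives the stated constant $9r$. There is no serious analytic obstacle here; the estimate is essentially Bessel's inequality in disguise. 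The only points requiring care are bookkeeping: correctly identifying the orthonormal system on the interval of length $2$ (so that the normalizing factor $\tfrac1{\sqrt2}$ matches the one in the definition of $A$), and tracking where the two constants come from — the factor $r$ from the comparison of the operator and Hilbert--Schmidt norms, and the factor $e^2\le9$ from the imaginary part of $\lambda$ on the unit circle.
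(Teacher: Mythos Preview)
Your proof is correct and follows essentially the same route as the paper: absorb the factor $e^{\mathrm i\lambda t}$ into the integrand, apply Parseval with respect to the orthonormal basis $\{\tfrac1{\sqrt2}e^{\mathrm i\pi n t}\}_{n\in\mathbb Z}$ of $L_2(-1,1)$, and pass between operator and Hilbert--Schmidt norms at the cost of a factor $r$. The only cosmetic difference is that the paper applies Parseval directly to the matrix Hilbert--Schmidt norm rather than decomposing into scalar entries, and phrases the weight bound as $\|e^{\mathrm i\lambda\,\cdot}f\|_{L_2}<3\|f\|_{L_2}$ instead of your pointwise $|e^{\mathrm i\lambda t}|^2\le e^2<9$.
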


\begin{proof} Let $f\in L_2((-1,1),\mathcal M_r)$, $\lambda\in\mathbb T_0$ and $\|S\|_2$ denote the Hilbert--Schmidt norm of a matrix $S\in \mathcal M_r$. Since $\left\{\frac1{\sqrt2}\mathrm e^{\mathrm i\pi n t}\right\}_{n\in\mathbb Z}$ is an orthonormal basis in $L_2(-1,1)$, it follows that
$$
\sum_{n\in\mathbb Z}\|A(\pi n)f\|^2\le
\sum_{n\in\mathbb Z}\|A(\pi n)f\|_2^2=
\int^1_{-1}\|f(x)\|_2^2 \,\mathrm dx \le r \int^1_{-1}\|f(x)\|^2 \,\mathrm dx.
$$
Taking into account that $A(\pi n+\lambda)f=A(\pi n)f_1$ with $f_1(t):=\mathrm e^{\mathrm i\lambda t}f(t)$ and that $\|f_1\|_{L_2}<3\|f\|_{L_2}$, we then arrive at (\ref{auxLemmaEq}).
\end{proof}

\begin{remark}
In the notations of the above lemma, formulas (\ref{scRepr}) can be rewritten as
\begin{equation}\label{scRepr1}
s_Q(\lambda)=(\sin\lambda)I+A(\lambda)f_1,\qquad c_Q(\lambda)=(\cos\lambda)I+A(\lambda)f_2.
\end{equation}
\end{remark}

Now we are ready to prove Theorem~\ref{th1}:

\begin{proofth}
Recalling formula~(\ref{resolv}) and the asymptotics (\ref{lambdaAsymp}) of eigenvalues of the operator $T_Q$, we obtain that there exists $N\in\mathbb N$ such that for every $n\in\mathbb Z$ with $|n|>N$,
$$
\mathcal P_n:=-\frac1{2\pi\mathrm i}\oint_{\mathbb T_n}\Phi_Q(\lambda)m_Q(\lambda)\Phi_{Q^*}(\overline\lambda)^*\mathrm d\lambda,\qquad
\mathcal P_n^0:=-\frac1{2\pi\mathrm i}\oint_{\mathbb T_n}\Phi_0(\lambda)m_0(\lambda)\Phi_0(\overline\lambda)^*\mathrm d\lambda,
$$
where $\mathbb T_n:=\{\lambda\in\mathbb C\mid|\lambda-\pi n|=1\}$. Therefore, for each $n\in\mathbb Z$ such that $|n|>N$,
$$
\|\mathcal P_n-\mathcal P_n^0\|=\left\|-\frac1{2\pi\mathrm i}\oint_{\mathbb T_n}\left(
\Phi_Q(\lambda)m_Q(\lambda)\Phi_{Q^*}(\overline\lambda)^*
-\Phi_0(\lambda)m_0(\lambda)\Phi_0(\overline\lambda)^*\right)\mathrm d\lambda\right\|
\le\|\alpha_n\|+\|\beta_n\|,
$$
where
\begin{equation}\label{alphaNdef}
\alpha_n:=-\frac1{2\pi\mathrm i}\oint_{\mathbb T_n}\Phi_Q(\lambda)
(m_Q(\lambda)-m_0(\lambda))\Phi_{Q^*}(\overline\lambda)^*\mathrm d\lambda
\end{equation}
and
\begin{equation}\nonumber
\beta_n:=-\frac1{2\pi\mathrm i}\oint_{\mathbb T_n}\left(
\Phi_Q(\lambda)m_0(\lambda)\Phi_{Q^*}(\overline\lambda)^*
-\Phi_0(\lambda)m_0(\lambda)\Phi_0(\overline\lambda)^*
\right)\mathrm d\lambda.
\end{equation}
The theorem will be proved if we show that $\sum_{|n|>N}\|\alpha_n\|^2<\infty$ and $\sum_{|n|>N}\|\beta_n\|^2<\infty$.

Let us prove the claim for $(\alpha_n)$ first. Taking into account (\ref{scRepr1}), observe that
\begin{equation}\label{auxEq2}
m_Q(\lambda)-m_0(\lambda)=s_Q(\lambda)^{-1}\left[(\cot\lambda)A(\lambda)f_1-A(\lambda)f_2\right],
\end{equation}
where $A(\lambda)$ is from Lemma~\ref{auxLemma}. Note that by virtue of the Riemann--Lebesgue lemma, without loss of generality we may assume that
$$
\sup_{|n|>N}\sup_{\lambda\in\mathbb T_n}\|A(\lambda)f_1\|\le\frac14.
$$
Since for every $\lambda\in \mathbb T_n$ one has $|\sin\lambda|\ge1/2$, in view of the first formula in (\ref{scRepr1}) it then holds
$$
\|s_Q(\lambda)^{-1}\|\le|\sin\lambda|^{-1}(1-|\sin\lambda|^{-1}\|A(\lambda)f_1\|)^{-1}\le4,\qquad
\lambda\in\mathbb T_n,\quad |n|>N.
$$
Since $|\cot\lambda|\le\sqrt3$ as $\lambda\in \mathbb T_n$, from (\ref{auxEq2}) we then obtain that
\begin{equation}\label{auxEq1}
\|m_Q(\lambda)-m_0(\lambda)\|^2\le64(\|A(\lambda)f_1\|^2+\|A(\lambda)f_2\|^2),\qquad
\lambda\in\mathbb T_n,\quad |n|>N.
\end{equation}
Next, taking into account (\ref{phiTrOp}), observe that for an arbitrary $Q\in\mathfrak Q_2$ and $\lambda\in \mathbb T_n$ it holds
\begin{equation}\label{auxEq4}
\|\Phi_Q(\lambda)\|\le\|\mathcal I+\mathcal K_Q\|\|\Phi_0(\lambda)\|\le2\|\mathcal I+\mathcal K_Q\|.
\end{equation}
By virtue of the Cauchy--Bunyakovsky inequality we then obtain from (\ref{alphaNdef}), (\ref{auxEq1}) and (\ref{auxEq4}) that for every $n\in\mathbb Z$ such that $|n|>N$,
\begin{align*}
\|\alpha_n\|^2\le
C\int_0^{2\pi}\left(
\|A(\pi n+\mathrm e^{\mathrm i t})f_1\|^2+\|A(\pi n+\mathrm e^{\mathrm i t})f_2\|^2
\right)\mathrm d t
\end{align*}
with some $C>0$. In view of Lemma~\ref{auxLemma} we then obtain that $\sum_{|n|>N}\|\alpha_n\|^2<\infty$.

It thus only remains to prove that $\sum_{|n|>N}\|\beta_n\|^2<\infty$. For this purpose, take into account (\ref{phiTrOp}) and observe that
\begin{align*}
&\Phi_Q(\lambda)m_0(\lambda)\Phi_{Q^*}(\overline\lambda)^*
-\Phi_0(\lambda)m_0(\lambda)\Phi_0(\overline\lambda)^*
=
\\
&\qquad\mathcal K_Q \Phi_0(\lambda)m_0(\lambda)\Phi_0(\overline\lambda)^*
+\Phi_0(\lambda)m_0(\lambda)\Phi_0(\overline\lambda)^*\mathcal K_{Q^*}^* +\mathcal K_Q \Phi_0(\lambda)m_0(\lambda)\Phi_0(\overline\lambda)^*\mathcal K_{Q^*}^*.
\end{align*}
Therefore,
$
\beta_n=\mathcal K_Q \mathcal P_n^0+[{\mathcal K_{Q^*}}\mathcal P_n^0]^* +\mathcal K_Q \mathcal P_n^0 \mathcal K_{Q^*}^*
$
and thus the claim will be proved if we show that for an arbitrary $Q\in\mathfrak Q_2$,
\begin{equation}\label{auxEq3}
\sum_{|n|>N}\|\mathcal K_Q \mathcal P_n^0\|^2<\infty.
\end{equation}
To this end, note that the operator $\mathcal K_Q$ belongs to the Hilbert--Schmidt class $\mathcal B_2$ and that the sequence $(\mathcal P_n^0)_{n\in\mathbb Z}$ consists of pairwise orthogonal projectors. Therefore, it holds
$$
\sum_{n\in\mathbb Z}\|\mathcal K_Q \mathcal P_n^0\|^2
\le\sum_{n\in\mathbb Z}\|\mathcal K_Q \mathcal P_n^0\|_{\mathcal B_2}^2
\le\|\mathcal K_Q\|_{\mathcal B_2}^2.
$$
Hence (\ref{auxEq3}) follows and the proof is complete.
\end{proofth}

\end{document}